\newcommand{\alp}{\alpha}
\newcommand{\bet}{\beta}
\newcommand{\gam}{\gamma}
\newcommand{\eps}{\varepsilon}
\newcommand{\kap}{\kappa}
\newcommand{\lam}{\lambda}
\newcommand{\sig}{\sigma}
\newcommand{\R}{{\mathbb R}}
\newcommand{\Z}{{\mathbb Z}}
\newcommand{\cS}{{\mathcal S}}
\newcommand{\hA}{\widehat{A}}
\newcommand{\oA}{\overline{A}}
\newcommand{\oS}{\overline{S}}
\newcommand{\od}{\overline{d}}
\newcommand{\oz}{\overline{z}}
\newcommand{\lfl}{\left\lfloor}
\newcommand{\rfl}{\right\rfloor}
\newcommand{\lcl}{\left\lceil}
\newcommand{\rcl}{\right\rceil}
\newcommand{\lfr}{\left\{}
\newcommand{\rfr}{\right\}}
\newcommand{\stm}{\setminus}
\newcommand{\seq}{\subseteq}
\DeclareMathOperator{\mes}{mes}
\newcommand{\mmod}[1]{\!\!\pmod{#1}}
\newcommand{\longc}{,\ldots,}
\newcommand{\longp}{+\ldots+}
\newcounter{mypar}
\newcommand{\mypar}%
  {\medskip\stepcounter{mypar}\emph{Step }\arabic{mypar}.\ \:}
\theoremstyle{plain}
\newtheorem{lemma}{Lemma}
\newtheorem{theorem}{Theorem}
\newtheorem*{maintheorem}{Main Theorem}
\newtheorem{corollary}{Corollary}
\theoremstyle{definition}
\newtheorem{example}{Example}
\newcommand{\refl}[1]{~\ref{l:#1}}
\newcommand{\reft}[1]{~\ref{t:#1}}
\newcommand{\refc}[1]{~\ref{c:#1}}
\newcommand{\refx}[1]{~\ref{x:#1}}
\newcommand{\refs}[1]{~\ref{s:#1}}
\newcommand{\refb}[1]{~\cite{b:#1}}
\newcommand{\refe}[1]{~\eqref{e:#1}}
\title{The continuous postage stamp problem}
\author{Vsevolod F. Lev}
\address{Department of Mathematics, The University of Haifa at Oranim,
  Tivon 36006, Israel.}
\email{seva@math.haifa.ac.il}
\subjclass[2000]{Primary: 11P99, secondary: 11B13.}
\begin{document}
\baselineskip 16pt

\begin{abstract}
For a real set $A$ consider the semigroup $\cS(A)$, additively generated by
$A$; that is, the set of all real numbers representable as a (finite) sum of
elements of $A$. If $A\seq(0,1)$ is open and non-empty, then $\cS(A)$ is
easily seen to contain all sufficiently large real numbers, and we let
 $G(A):=\sup\{u\in\R\colon u\notin\cS(A)\}$. Thus $G(A)$ is the smallest
number with the property that any $u>G(A)$ is representable as indicated
above.

We show that if the measure of $A$ is large, then $G(A)$ is small; more
precisely, writing for brevity $\alp:=\mes A$ we have
  $$ G(A) \le \begin{cases}
                (1-\alp)\lfl 1/\alp\rfl
                   \quad&\text{if \, $0<\alp\le 0.1$}, \\
                (1-\alp+\alp\{1/\alp\})\lfl 1/\alp\rfl
                   \quad&\text{if \, $0.1 \le \alp \le 0.5$}, \\
                2(1-\alp)
                   \quad&\text{if \, $0.5 \le \alp \le 1$}. \\
              \end{cases} $$
Indeed, the first and the last of these three estimates are the best
possible, attained for $A=(1-\alp,1)$ and $A=(1-\alp,1)\stm\{2(1-\alp)\}$,
respectively; the second is close to the best possible and can be improved by
$\alp\{1/\alp\}\lfl 1/\alp\rfl\le\{1/\alp\}$ at most.

The problem studied is a continuous analogue of the linear Diophantine
problem of Frobenius (in its extremal settings due to Erd\H{o}s and Graham),
also known as the ``postage stamp problem'' or the ``coin exchange problem''.
\end{abstract}

\maketitle

\section{Background: the discrete postage stamp problem}

Let $A$ be a non-empty set of positive integers such that $\gcd(A)=1$. It is
not difficult to see that all sufficiently large integers can be represented
as a sum of elements of $A$. The problem of determining the largest integer
$G(A)$ which does not have such a representation is
% This was first observed by Sylvester who raised
% the problem of computing $G(A)$, the largest integer which \emph{cannot} be
% so represented. As the problem was frequently mentioned by Frobenius, it got
known as the ``linear Diophantine problem of Frobenius''. Though this problem
is computational by its nature, there are numerous papers concentrating on
\emph{estimates} of $G(A)$.

Erd\H os and Graham posed in \refb{eg} the following extremal version of the
Frobenius problem: given two positive integers $l\ge n$, estimate
\begin{equation}\label{e:disc}
  \max\{G(A)\colon A\seq[1,l],\, |A|=n,\, \gcd(A)=1\}.
\end{equation}
The ideology here is that if $A$ is dense, then $G(A)$ must be small. The
basic result on the problem of Frobenius-Erd\H os-Graham was obtained by
Dixmier in \refb{d}. (Several years later it was independently established by
the present author as a corollary of a general theorem on set addition; see
\refb{l2}.) The exact value of \refe{disc} is known for some particular
families of pairs $(n,l)$ only, and also for $l\le 3n-2$; see \refb{l1}.

\section{Preliminaries: examples and summary of results}\label{s:prelim}

Motivated by their work on primality testing, Lenstra and Pomerance \refb{lp}
stated recently an analogue of the Frobenius-Erd\H os-Graham problem for
bounded sets $A$ of positive \emph{real} numbers. The condition $\gcd(A)=1$
becomes then irrelevant, and scaling allows one to assume $A\seq(0,1)$; thus,
the parameter $l$ is gone. Furthermore, following \refb{lp} we assume that
$A$ is open; this ensures measurability and forces $A$ to be a finite or
countable union of open intervals. Let $\cS(A)$ denote the set of all
numbers, representable as a finite sum of elements of $A$; thus
  $$ \textstyle \cS(A) = \bigcup_{h=1}^\infty\; hA, $$
where $hA$ is the set of all sums of exactly $h$ elements of $A$.

Is it true, by the analogy with the integer case, that $\cS(A)$ contains all
sufficiently large numbers? Set
  $$ G(A):=\sup\{u\in\R \colon u\notin\cS(A)\}; $$
the question therefore is whether $G(A)$ is finite for any open non-empty
subset $A\seq(0,1)$. To proceed, we work out a simple, yet important,
example.

\begin{example}\label{x:first}
Suppose that $A$ consists of one single interval: $A=(\bet,\gam)$, where
$\gam>\bet\ge 0$. Then for $h\ge 1$ we have $hA=(h\bet,h\gam)$, hence $hA$
and $(h+1)A$ intersect if and only if $h\gam>(h+1)\bet$, or equivalently
$h>\bet/(\gam-\bet)$; that is,
  $$ h \ge \lfl \frac{\bet}{\gam-\bet} \rfl + 1
                                      = \lfl \frac{\gam}{\gam-\bet}\rfl. $$
It follows that $G(A)=\lfl \gam/(\gam-\bet)\rfl \bet$. In particular, if
$A=(1-\alp,1)$ with $\alp\in(0,1]$ (so that $\alp=\mes A$), then
$G(A)=(1-\alp)\lfl1/\alp\rfl$.
\end{example}

Back to the general case, suppose that $A\seq(0,1)$ is open and non-empty.
Then $A$ contains an open interval; say, $(\bet,\gam)\seq A$. In view of
Example \refx{first} every $u>\lfl \gam/(\gam-\bet)\rfl \bet$ is
representable as a sum of elements of $(\bet,\gam)$, hence as a sum of
elements of $A$; this shows that $G(A)$ is finite and indeed that
 $G(A)\le\lfl \gam/(\gam-\bet)\rfl \bet$.

The exact problem raised in \refb{lp} is to determine, for a fixed real
number $s>0$, how large the logarithmic measure $\int_A t^{-1}\,dt$ of an
open subset $A\seq(0,1)$ can be, given that $s\notin\cS(A)$. This is
essentially equivalent to the following question: given the logarithmic
measure of $A$, how large can $G(A)$ be? It is this question that is
investigated in our paper, except that, following the lines of the classical
Frobenius-Erd\H os-Graham problem, we will be concerned with the regular
Lebesgue measure $\mes A$ rather than the logarithmic measure.\!\!
\footnote{
For a complete solution of the original problem of Lenstra and Pomerance see
the paper of Bleichenbacher \refb{b}. The method of Bleichenbacher is based
on linear programming and can be applied to handle the Lebesgue measure, too;
however in this case it yields somewhat weaker results.}

We now present two constructions of open sets $A\seq(0,1)$ such that, letting
$\alp=\mes A$, we have $G(A)>(1-\alp)\lfl1/\alp\rfl$ (compare with Example
\refx{first}).
\begin{example}\label{x:second}
Suppose that $\alp\in(1/2,1]$ and set $A:=(1-\alp,1)\stm\{2(1-\alp)\}$. Then
for $h\ge 2$ we have $hA=(h(1-\alp),h)\supseteq[h-1,h)$, whence
  $$ \textstyle \cS(A)\supseteq\bigcup_{h=2}^\infty[h-1,h)=[1,\infty) $$
and therefore $G(A)=2(1-\alp)$.
\end{example}

Our second construction is more elaborate.
\begin{example}\label{x:third}
Suppose that $\alp\in(1/3,1/2)$. Write
  $$ k := \lcl \frac1{1-2\alp}\rcl-2,\quad x := 1-\frac1{k+2},%
          \quad \text{and}\quad t:= 2\Big(1-\frac1{k+3}\Big)(1-\alp), $$
so that $k\ge 2$. One checks easily that
\begin{gather*}
  \frac12\,\Big( 1-\frac1{k+1} \Big) < \alp
                              \le \frac12\,\Big( 1-\frac1{k+2} \Big) \\
\intertext{and}
  1-\frac1k < x < \frac1t \le 1,
\end{gather*}
and we let
  $$ A := \Big( \frac{tx}k\,,\frac tk \Big)\cup
          \Big( \frac{2tx}k\,,\frac {2t}k \Big)\cup
                              \cdots\cup
          \Big( \frac{(k-1)tx}k\,,\frac {(k-1)t}k \Big)\cup
          \Big( tx\,,1 \Big). $$
A routine verification shows that $\mes A = \alp$; on the other hand,
\begin{multline*}
  %\textstyle
    G(A) = G\Big(\Big(\frac{tx}k\,,\frac tk\Big)\Big)
                                                  = \frac tk\,G((x,1)) \\
    = \frac tk\,x\lfl\frac1{1-x}\rfl = \frac {k+1}k\,t
                                = 2\frac{(k+1)(k+2)}{k(k+3)}\,(1-\alp) \\
    = 2\Big( 1+\frac2{k(k+3)}\Big) (1-\alp) > (1-\alp)\lfl 1/\alp\rfl.
\end{multline*}
\end{example}

%With Examples \refx{first}--\ref{x:third} in mind, one can appreciate our
%main result.
We are now ready to state the main result of our paper.
\begin{maintheorem}
Let $A\seq(0,1)$ be an open set of measure $\alp\in(0,1]$. Then
  $$ G(A) \le \begin{cases}
                (1-\alp)\lfl 1/\alp\rfl
                   \quad&\text{if \, $0<\alp\le 0.1$}, \\
                (1-\alp+\alp\{1/\alp\})\lfl 1/\alp\rfl
                   \quad&\text{if \, $0.1 \le \alp \le 0.5$}, \\
                2(1-\alp)
                   \quad&\text{if \, $0.5 \le \alp \le 1$}. \\
              \end{cases} $$
\end{maintheorem}

As Examples \refx{first} and \refx{second} show, the first and the last of
these three estimates are sharp while the second estimate can be improved by
at most $\alp\lfr1/\alp\rfr\lfl1/\alp\rfl\le\lfr1/\alp\rfr$. On the other
hand, we suspect that the first estimate can be extended onto the wider range
$\alp\in(0,1/3]$. Indeed, for no $\alp\in(0,1)$ were we able to construct an
open set $A\seq(0,1)$ of measure $\alp$ with $G(A)$ larger than that provided
by Examples \refx{first}--\ref{x:third}.

We believe that with a slight refinement of our method one can replace $0.1$
in the statement of the Main Theorem by a somewhat larger value; perhaps,
$1/8$ or so. However, to get closer to $1/3$ one needs substantially new
ideas.

The rest of this paper is devoted to the proof of the Main Theorem. In
Section \refs{dense} we first settle the case of dense sets ($\alp>0.5$);
this is a simple part of the argument requiring nothing but an iterated
application of the box principle. Using a theorem by Macbeath on addition
of subsets of the torus group, we then derive the estimate for the range
$\alp\ge 0.1$.

The case $\alp\le0.1$ is much subtler. In Section \refs{aux} we collect a
number of auxiliary results from different sources needed to handle this
case. Two more auxiliary results are established in Section \refs{lemmas}.
Having finished the preparations, we complete our treatment of the ``sparse
case'' in Section \refs{sparse}.

\section{Proof of the Main Theorem: $0.1\le\alp\le 1$}\label{s:dense}

We use the box principle in the following ``continuous'' form.
\begin{lemma}\label{l:boxing}
Let $v$ be a positive real number and suppose that an open subset
$A\seq(0,v)$ satisfies $\mes A>v/2$. Then $v\in 2A$.
\end{lemma}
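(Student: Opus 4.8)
The plan is to play $A$ against its own reflection in the point $v/2$. Set $B := v - A = \{v-a\colon a\in A\}$. Reflection followed by translation is measure-preserving and carries open sets to open sets, so $B\seq(0,v)$ is open with $\mes B = \mes A > v/2$.

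The second step is to observe that $A$ and $B$ cannot be disjoint, nor even essentially disjoint. Since both are contained in $(0,v)$, inclusion--exclusion gives
  $$ \mes(A\cap B) = \mes A + \mes B - \mes(A\cup B) \ge \mes A + \mes B - v > 0, $$
so in particular $A\cap B\ne\emptyset$. Now pick any $x\in A\cap B$. From $x\in B$ we obtain $x = v-a$ for some $a\in A$; that is, $v = x+a$ with both $x$ and $a$ lying in $A$. Hence $v\in 2A$, which is what we wanted.

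This is just the Steinhaus-type trick of reflecting a large set about its target sum, so there is no substantial obstacle here; the only point deserving a moment's attention is that the hypothesis $\mes A > v/2$ is \emph{strict}, which is precisely what forces $\mes(A\cap B)$ to be strictly positive and so guarantees that the intersection is non-empty (indeed of positive measure). One should also keep in mind that, in the convention of the present paper, $2A$ allows the two summands to coincide, so the borderline case $x=a=v/2$ causes no difficulty.
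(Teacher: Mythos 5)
Your proof is correct and is essentially the paper's own argument: reflect $A$ to $v-A$, note both lie in $(0,v)$ with total measure exceeding $v$, so they intersect, and any point of the intersection exhibits $v$ as a sum of two elements of $A$. You merely spell out the measure computation and the final extraction of the representation in more detail than the paper does.
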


\begin{proof}
Just notice that the sets $A\seq(0,v)$ and
 $v-A:=\{v-a\colon a\in A\}\seq(0,v)$ have a non-empty intersection in view
of $\mes A+\mes(v-A) = 2 \mes A > v$.
\end{proof}

\begin{proof}[Proof of the Main Theorem for $0.5<\alp\le1$]
Suppose that $A\seq(0,1)$ is an open set with $\alp:=\mes A\in(0.5,1]$; we
are to show that $(2(1-\alp),\infty)\seq\cS(A)$.

First we observe that $(2(1-\alp),1]\seq\cS(A)$: for if $2(1-\alp)<u\le 1$
then $\mes (A\cap(0,u)) \ge \alp-(1-u)>u/2$, hence $u\in2A\seq\cS(A)$ by
Lemma \refl{boxing}.

Next we claim that $[1,2\alp)\seq\cS(A)$: this is because for any
$u\in[1,2\alp)$ we can consider $A$ as an open subset of $(0,u)$ of measure
$\alp>u/2$, and the claim follows by Lemma \refl{boxing}.

Finally, we use induction to prove that
$[1,2^k(2\alp-1)+2(1-\alp))\seq\cS(A)$ for any integer $k\ge 1$. For $k=1$
this reduces to $[1,2\alp)\seq\cS(A)$, proven above. Suppose that $k\ge 2$.
Then by the induction hypothesis for any $u$ satisfying
  $$ 2^{k-1}(2\alp-1)+2(1-\alp) \le u < 2^k(2\alp-1)+2(1-\alp) $$
we have
\begin{multline*}
   \mes (\cS(A)\cap(0,u)) \ge \alp + (2^{k-1}(2\alp-1)+2(1-\alp) - 1) \\
          = 2^{k-1}(2\alp-1)+(1-\alp) > u/2,
\end{multline*}
whence $u\in 2\cS(A)=\cS(A)$ by Lemma \refl{boxing}. This proves the Main
Theorem for the case $0.5<\alp\le 1$.
\end{proof}

We recall a classical theorem by Macbeath on addition of torus subsets.\!\!
\footnote{In fact, the original Macbeath' theorem is slightly stronger than
  its version presented below.}
In this theorem and throughout the rest of the paper, for an integer $h\ge 1$
and subsets $A_1\longc A_h$ of an abelian group, we write
  $$ A_1\longp A_h := \{a_1\longp a_h\colon a_1\in A_1\longc a_h\in A_h\}. $$
We use the abbreviation $hA$ for the sum of $h$ instances of the same set
$A$. (For the special case where $A$ is an open subset of $(0,1)$ this
notation was introduced at the beginning of Section \refs{prelim}.)

\begin{theorem}[Macbeath \refb{mc}]\label{t:macbeath}
Suppose that $A$ and $B$ are open subsets of the torus group $\R/\Z$. Then
  $$ \mes(A+B) \ge \min \{ \mes A+\mes B,\, 1 \}. $$
\end{theorem}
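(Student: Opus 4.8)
The plan is to reduce the inequality to the classical Cauchy--Davenport theorem in $\Z/p\Z$ with $p$ prime, via a discretization of the torus, and then to let $p\to\infty$. One may assume that $A$ and $B$ are non-empty, since otherwise the right-hand side vanishes and there is nothing to prove; consequently the discrete sets introduced below are non-empty once $p$ is large enough.

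Fix a prime $p$ and consider the $p$ pairwise disjoint open arcs $(j/p,(j+1)/p)$, $j\in\Z/p\Z$, which together cover $\R/\Z$ up to finitely many points. Put
$$ \hA := \{ j\in\Z/p\Z \colon (j/p,(j+1)/p)\seq A \}
   \quad\text{and}\quad
   \widehat B := \{ j\in\Z/p\Z \colon (j/p,(j+1)/p)\seq B \}, $$
suppressing the dependence on $p$. Since the arcs indexed by $\hA$ are disjoint and contained in $A$, we have $|\hA|/p\le\mes A$; and since $A$ is a countable disjoint union of open arcs (an arc of length $\ell$ contains at least $\ell p-2$ of our cells), a routine estimate gives $|\hA|/p\to\mes A$ as $p\to\infty$, and likewise $|\widehat B|/p\to\mes B$. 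On the other hand, if $j\in\hA$ and $k\in\widehat B$ then $(j/p,(j+1)/p)+(k/p,(k+1)/p)$ is an arc of length $2/p$ containing $\big((j+k)/p,(j+k+1)/p\big)$; hence for every $j\in\hA+\widehat B$ the $j$-th arc lies in $A+B$, and therefore $\mes(A+B)\ge|\hA+\widehat B|/p$.

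At this point I would invoke the Cauchy--Davenport theorem, which for the prime modulus $p$ yields $|\hA+\widehat B|\ge\min\{|\hA|+|\widehat B|-1,\ p\}$. Combined with the bound from the previous paragraph, this gives
$$ \mes(A+B)\ \ge\ \min\Big\{ \frac{|\hA|+|\widehat B|-1}{p},\ 1 \Big\} $$
for every prime $p$. Letting $p\to\infty$ and using $|\hA|/p\to\mes A$ and $|\widehat B|/p\to\mes B$, the right-hand side converges to $\min\{\mes A+\mes B,\ 1\}$, while the left-hand side does not depend on $p$; the assertion follows.

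The only point requiring genuine care is the limit $|\hA|/p\to\mes A$ for open $A$, and it is precisely here that openness is used in an essential way --- the inequality is false for arbitrary measurable sets, as the complement of a fat Cantor set shows --- but this is an elementary exercise with the countably many arcs composing $A$. The restriction to a \emph{prime} modulus is what permits a direct appeal to Cauchy--Davenport and avoids the proper-subgroup term that Kneser's theorem would contribute at composite moduli.
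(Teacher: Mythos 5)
Your proof is correct. Note, however, that the paper does not prove this statement at all: it quotes Macbeath's theorem as a known result with a citation to [M53] (and a footnote remarking that Macbeath's original theorem is in fact slightly stronger than the version stated). So you have supplied a genuine argument where the paper supplies none. Your route --- discretizing the torus into $p$ cells $(j/p,(j+1)/p)$, passing to the sets $\hA,\widehat{B}\seq\Z/p\Z$ of cells wholly contained in $A$ and $B$, applying Cauchy--Davenport, and letting the prime $p\to\infty$ --- is a standard and fully rigorous transference argument; all the delicate points are handled: non-emptiness of $\hA,\widehat{B}$ for large $p$, the containment of the $(j+k)$-th cell in $A+B$, the disjointness of those cells, and the convergence $|\hA|/p\to\mes A$, which is exactly where openness enters. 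Macbeath's own proof is instead a direct measure-theoretic/covering argument on the torus, which is what yields the slightly stronger conclusion alluded to in the footnote; your approach buys elementarity and self-containment at the cost of recovering only the stated (and here entirely sufficient) form, and it is also in the spirit of the paper, which elsewhere (Theorem~\ref{t:clint}) deliberately derives continuous statements from discrete ones by the very same cell-discretization. One small slip in your closing aside: the set for which $|\hA|/p\not\to\mes A$ is a fat Cantor set itself (nowhere dense, so no cell is ever contained in it), not its complement, which is open and for which the limit does hold; this does not affect the proof.
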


\begin{proof}[Proof of the Main Theorem for $0.1\le\alp\le 0.5$]
We actually show that if $A\seq(0,1)$ is a non-empty open set with
$\alp:=\mes A\le0.5$, then
$G(A)\le(1-\alp+\alp\lfr1/\alp\rfr)\lfl1/\alp\rfl$; this strengthening of the
second inequality of the Main Theorem will be used in Section \refs{sparse}
in the course of the proof of the first inequality.

Write $\sig:=\sup A$ and let $A':=\{ \sig^{-1}a \colon a\in A\}$. Consider an
open subset $A_0\seq A'$ such that $\mes A_0=\alp$ and $\sup A_0=1$. Then
$G(A)=\sig G(A')\le\sig G(A_0)\le G(A_0)$, which shows that $\sup A=1$ can be
assumed without loss of generality.

For positive integer $j$, set $S_j:=\cS(A)\cap(j-1,j)$, and denote by $\oA$
and $\oS_j$ the canonical images of $A$ and $S_j$, respectively, in the torus
group $\R/\Z$. In view of the assumption $\sup A=1$ and since $S_j$ are open
sets, we have $S_j+1\seq S_{j+1}$, implying $\oS_j\seq\oS_{j+1}$. Now from
$S_j+A\seq S_j\cup S_{j+1}\cup \{j\}$ we get
$\oS_j+\oA\seq\oS_j\cup\oS_{j+1}\cup\{0\}=\oS_{j+1}\cup\{0\}$, whence
 $\mes S_{j+1}\ge \min\{\mes S_j+\alp,\,1\}$ by Theorem \reft{macbeath}.
Induction by $j$ yields
\begin{equation}\label{e:mesSj}
  \mes S_j \ge \min \{j\alp,\,1\};\quad j\ge 1.
\end{equation}

Let $k:=\lfl 1/\alp\rfl$ and write $T:=\cS(A)\cap(0,k)$. By \refe{mesSj} we
have
  $$ \mes T = \mes S_1 + \mes S_2 \longp \mes S_k
                                            \ge  (k(k+1)/2)\,\alp > k/2. $$
Consequently, applying the third inequality of the Main Theorem (established
above) to the open set $T':=\{k^{-1}t\colon t\in T\}\seq(0,1)$ of measure
 $\mes T'=k^{-1}\mes T>0.5$, we obtain
\begin{multline*}
  G(A) = G(T) = kG(T') \le 2k(1-\mes T') = 2k(1-k^{-1}\mes T) \\
        \le 2k(1-((k+1)/2)\alp)
          = \big(2-(\lfl 1/\alp\rfl+1)\,\alp\big) \lfl 1/\alp\rfl \\
          = \big (1-\alp+\alp\{1/\alp\} \big) \lfl 1/\alp \rfl,
\end{multline*}
as required.
\end{proof}

\section{Auxiliary results}\label{s:aux}

We gather here several results that will be used in Sections \refs{lemmas}
and \refs{sparse} to prove the first inequality of the Main Theorem.

We start with a basic theorem by Cauchy and Davenport. For a positive integer
$p$ by $\Z/p\Z$ we denote the group of residues modulo $p$.
\begin{theorem}[Cauchy-Davenport \cite{b:c,b:da}]\label{t:CD}
For any prime $p$ and any non-empty subsets $A,B\seq\Z/p\Z$ we have
  $$ |A+B| \ge \min\{|A|+|B|-1,p\}. $$
\end{theorem}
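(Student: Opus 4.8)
The plan is to prove the Cauchy--Davenport inequality by induction on $|B|$, using the classical Dyson $e$-transform. The base case $|B|=1$ is immediate, since then $A+B$ is a translate of $A$, so $|A+B|=|A|=|A|+|B|-1$, which is at most $p$.

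For the inductive step, assuming $|B|\ge 2$ and the assertion known for all pairs with a strictly smaller second summand, I would introduce for each $e\in\Z/p\Z$ the transformed pair
\[ A_e:=A\cup(B+e),\qquad B_e:=B\cap(A-e). \]
Two elementary facts drive the argument. First, $A_e+B_e\seq A+B$: if $x\in A_e$ and $y\in B_e$, then $y\in A-e$, and whether $x\in A$ or $x=b+e$ with $b\in B$, one checks directly that $x+y\in A+B$. Second, $|A_e|+|B_e|=|A|+|B|$ by inclusion--exclusion, since $a\mapsto a-e$ gives a bijection $A\cap(B+e)\to B\cap(A-e)$, so $|A\cap(B+e)|=|B_e|$. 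Consequently, if for some $e$ we have $0<|B_e|<|B|$, then the induction hypothesis applied to $(A_e,B_e)$ yields
\[ |A+B|\ge|A_e+B_e|\ge\min\{|A_e|+|B_e|-1,\,p\}=\min\{|A|+|B|-1,\,p\}, \]
and we are done.

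It then remains to rule out the case that no such $e$ exists, i.e.\ that for every $e$ the set $B\cap(A-e)$ is either empty or all of $B$. Here I would fix $b_0\in B$: choosing $e=a-b_0$ for $a\in A$ forces $b_0\in B_e$, hence $B_e=B$, i.e.\ $B+a-b_0\seq A$. Letting $a$ range over $A$ gives $A+(B-b_0)\seq A$, and comparing cardinalities $A+(B-b_0)=A$. Iterating, $A$ is invariant under the subgroup of $\Z/p\Z$ generated by $B-b_0$; since $|B|\ge 2$ this subgroup is nontrivial, and as $p$ is prime it is all of $\Z/p\Z$. Therefore $A=\Z/p\Z$, so $A+B=\Z/p\Z$ and the bound holds with value $p$.

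The only genuinely delicate point — and the step I expect to be the crux — is this final dichotomy: ruling out the ``no good $e$'' scenario forces $A$ to be a coset of a subgroup, which for prime $p$ collapses to the whole group. This is exactly where primality is used; for a composite modulus the transform can stall with $A$ a nontrivial coset, and one recovers only the weaker Kneser-type bound. (An entirely different route would be the polynomial method: if $|A+B|\le|A|+|B|-2<p$, apply the Combinatorial Nullstellensatz to $\prod_{c\in A+B}(x+y-c)$ to derive a contradiction — but the transform argument above is more self-contained and needs no outside input.)
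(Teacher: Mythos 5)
Your proof is correct. The paper does not prove Theorem~\ref{t:CD} at all --- it is quoted as a classical result with references to Cauchy and Davenport --- so there is no in-paper argument to compare against. What you give is the standard Davenport/Dyson $e$-transform proof, and all the steps check out: the two key facts about $(A_e,B_e)$ (that $A_e+B_e\seq A+B$ and that $|A_e|+|B_e|=|A|+|B|$ via the bijection $a\mapsto a-e$ on $A\cap(B+e)$) are verified correctly, the induction on $|B|$ is set up properly with a sound base case, and the terminal case is handled rightly: the choice $e=a-b_0$ forces $B_e\ne\emptyset$, hence $B_e=B$, hence $A+(B-b_0)=A$, and the stabilizer argument together with primality of $p$ gives $A=\Z/p\Z$. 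Your closing remark correctly identifies where primality enters. This is a complete, self-contained proof of the cited theorem.
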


Straightforward induction yields
 $|A_1\longp A_h| \ge \min \{|A_1|\longp|A_h|-(h-1),\,p\}$ for any integer
$h\ge 2$ and non-empty subsets $A_1\longc A_h\seq\Z/p\Z$. This readily
implies the following corollary.
\begin{corollary}\label{c:CD}
Let $h$ be a positive integer and $p$ a prime number. Suppose that non-empty
subsets $A_1\longc A_h\seq\Z/p\Z$ satisfy $A_1\longp A_h\neq\Z/p\Z$. Then
  $$ |A_1|\longp |A_h| \le p+h-2. $$
\end{corollary}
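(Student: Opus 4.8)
The corollary is an immediate consequence of the iterated Cauchy--Davenport inequality
\[
  |A_1\longp A_h| \ge \min \{ |A_1|\longp|A_h| - (h-1),\, p \},
\]
which is recorded just above as a straightforward induction on $h$ from Theorem \reft{CD}. So the plan is: first confirm that induction (base case $h=1$ is trivial, and the inductive step applies Theorem \reft{CD} to the pair $A_1\longp A_{h-1}$ and $A_h$, using that $\min\{x,p\}+|A_h|-1 \ge \min\{x+|A_h|-1,p\}$ when $|A_h|\ge 1$), and then deduce the stated bound in one line.

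For the deduction: the hypothesis $A_1\longp A_h \neq \Z/p\Z$ means $|A_1\longp A_h| \le p-1$, in particular $|A_1\longp A_h| < p$. Hence in the iterated inequality the minimum cannot be attained at $p$, so it must equal $|A_1|\longp|A_h| - (h-1)$, and therefore
\[
  |A_1|\longp|A_h| - (h-1) \le |A_1\longp A_h| \le p-1,
\]
which rearranges to $|A_1|\longp|A_h| \le p+h-2$, as claimed.

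There is essentially no obstacle here; the only point requiring any care is to state the iterated Cauchy--Davenport inequality correctly (keeping the ``$\min$ with $p$'' in place at every stage of the induction, rather than discarding it prematurely). Since that inequality is already taken as given in the excerpt, the corollary follows at once.
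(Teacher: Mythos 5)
Your argument is correct and is exactly the route the paper intends: it records the iterated Cauchy--Davenport inequality by induction and then observes that $A_1\longp A_h\neq\Z/p\Z$ forces the minimum to be attained at $|A_1|\longp|A_h|-(h-1)\le p-1$, which rearranges to the stated bound. No issues.
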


The next lemma is due to Freiman. For a set $S\seq\R$ denote by $S\mmod 1$
the canonical image of $S$ in the torus group $\R/\Z$.
\begin{lemma}[Freiman \refb{f}]\label{l:unif}
Let $n$ be a positive integer and let $z_1\longc z_n\in\R/\Z$. Write
$S:=\sum_{j=1}^n e^{2\pi i z_j}$. Then there exists a real number $\bet$ such
that
  $$ \# \{j\in[1,n]\colon z_j\in[\bet,\bet+1/2) \mmod 1 \}
                                                   \ge \frac {n+|S|}2. $$
\end{lemma}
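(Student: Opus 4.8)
The plan is to prove this by a clever averaging argument over the position $\bet$, exploiting the fact that $\bigl(\sum_j e^{2\pi i z_j}\bigr)$ records how concentrated the points are. Fix the points $z_1\longc z_n$ and, for each real $\bet$, let $N(\bet):=\#\{j\colon z_j\in[\bet,\bet+1/2)\mmod 1\}$. Since the claim is a statement about the supremum of $N(\bet)$, it suffices to bound the \emph{average} of $N(\bet)$ over $\bet\in[0,1)$ suitably from below; but the plain average is exactly $n/2$ and gives nothing, so one must weight or twist the average. The right device is to average $N(\bet)$ against a function of $\bet$ chosen so that the resulting integral picks up the exponential sum $S$.

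Concretely, the key step is the identity
\[
  \int_0^1 \Bigl(N(\bet)-\tfrac n2\Bigr)\, e^{-2\pi i\bet}\, d\bet
    = c\,\sum_{j=1}^n e^{-2\pi i z_j}
\]
for an explicit nonzero constant $c$ (one gets this by writing $N(\bet)=\sum_j \mathbf{1}_{[\bet,\bet+1/2)}(z_j)$, swapping sum and integral, and computing $\int \mathbf{1}_{[\bet,\bet+1/2)}(z_j)e^{-2\pi i\bet}d\bet$; the half-interval indicator produces a factor like $\frac1{\pi i}\cdot e^{-2\pi i z_j}$, with the constant term $-n/2$ exactly killing the contribution of the "DC" part). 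After rotating all the $z_j$ by a common phase — i.e. replacing $z_j$ by $z_j+\theta$ for a well-chosen $\theta$, which only shifts $\bet$ and leaves $\sup_\bet N(\bet)$ unchanged — we may assume $S=\sum_j e^{2\pi i z_j}$ is a nonnegative real number, equal to $|S|$. Then taking real parts (or absolute values) of the displayed identity gives
\[
  |c|\cdot|S| \;=\; \Bigl|\int_0^1 \bigl(N(\bet)-\tfrac n2\bigr)e^{-2\pi i\bet}\,d\bet\Bigr|
             \;\le\; \int_0^1 \bigl(N(\bet)-\tfrac n2\bigr)^{+}\!\! +\bigl(N(\bet)-\tfrac n2\bigr)^{-}\,d\bet,
\]
and since $\int_0^1 (N(\bet)-n/2)\,d\bet=0$, the positive and negative parts have equal integral, so the right side is $2\int_0^1 (N(\bet)-n/2)^{+}d\bet \le 2\max_\bet\bigl(N(\bet)-n/2\bigr)$. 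Checking that $|c|$ works out to $1/\pi$ — hence $2\cdot(1/\pi)<1$ is \emph{too weak} — tells me the crude $L^1$ bound loses a constant, so the actual argument must instead compare $N(\bet)$ at the point where it is maximized with $N$ at the antipodal point $\bet+1/2$, using $N(\bet)+N(\bet+1/2)=n$, and feed that into a sharper estimate; alternatively one localizes the integral to a short arc around the maximizing $\bet$.

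I expect the main obstacle to be precisely this constant: getting $(n+|S|)/2$ on the nose rather than $n/2 + C|S|$ with $C<1/2$. The resolution I would pursue is to not average over all of $[0,1)$ but to exploit monotonicity of $N$: as $\bet$ increases, $N(\bet)$ changes by $\pm1$ at the (at most $2n$) points congruent to some $z_j$ or $z_j+1/2$, so $N$ is a $\pm1$ step function, and its maximum $M:=\max_\bet N(\bet)$ satisfies $M\ge n-M$ (the antipodal identity), i.e. $M\ge n/2$ automatically; the extra gain of $|S|/2$ should come from bounding the exponential sum by the "net displacement" of the center of mass, which in turn is controlled by how long $N$ stays near its maximum versus near its minimum $n-M$. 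Writing $S$ in terms of an integral of $N(\bet)$ against a trigonometric kernel and then invoking the constraint $n-M\le N(\bet)\le M$ should yield $|S|\le 2M-n$, which rearranges exactly to $M\ge (n+|S|)/2$, as desired. The delicate point to get right is the sign/normalization in the kernel identity and the reduction to real nonnegative $S$ by a rotation.
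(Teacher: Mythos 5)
Your general strategy --- averaging $N(\bet):=\#\{j\colon z_j\in[\bet,\bet+1/2)\bmod 1\}$ against a kernel chosen so that the integral picks up the exponential sum --- is the right one (the paper itself gives no proof, quoting the lemma from Freiman), but the argument you actually carry out falls short by a constant factor, and none of your proposed repairs closes the gap. With the kernel $e^{-2\pi i\bet}$ one has exactly $\int_0^1\mathbf{1}_{[\bet,\bet+1/2)}(z)\,e^{-2\pi i\bet}\,d\bet=\frac{i}{\pi}\,e^{-2\pi iz}$, so $|S|=\pi\,\bigl|\int_0^1(N(\bet)-n/2)\,e^{-2\pi i\bet}\,d\bet\bigr|$; even the sharpest duality bound, $\|N-n/2\|_\infty$ times the $L^1$-norm of the kernel, gives only $|S|\le\pi(M-n/2)$ where $M:=\max_\bet N(\bet)$, i.e.\ $M\ge n/2+|S|/\pi$ rather than $M\ge n/2+|S|/2$. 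In particular your closing claim that the constraint $n-M\le N(\bet)\le M$ ``should yield $|S|\le 2M-n$'' is false for this kernel: the loss of $\pi/2$ is intrinsic to testing against an oscillating kernel, and neither the antipodal identity $N(\bet)+N(\bet+1/2)=n$ nor localizing the integral recovers it.

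The missing device is to average against a \emph{nonnegative} weight for which the key relation is an exact identity rather than a lossy inequality. Take $w(\bet):=\pi\cos(2\pi\bet)$ for $\bet\in[-1/4,1/4]$ (mod $1$) and $w:=0$ elsewhere; then $w\ge 0$ and $\int_0^1 w=1$, and a short two-case computation shows that for every $z$ one has $\int_0^1\mathbf{1}_{[\bet,\bet+1/2)}(z)\,w(\bet)\,d\bet=\int_{z-1/2}^{z}w(\bet)\,d\bet=\tfrac12\bigl(1+\sin(2\pi z)\bigr)$. Summing over $j$ gives $\int_0^1 N(\bet)\,w(\bet)\,d\bet=\tfrac n2+\tfrac12\,\mathrm{Im}\,S$ \emph{exactly}. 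Replacing each $z_j$ by $z_j+\theta$ (which only translates $N$ and rotates $S$) with $\theta$ chosen so that $\mathrm{Im}(e^{2\pi i\theta}S)=|S|$, you get $\int_0^1 N(\bet)\,w(\bet)\,d\bet=(n+|S|)/2$, and since $w$ is a probability density the maximum of $N$ is at least its $w$-average. So your diagnosis that the naive $L^1$ bound loses a constant is correct, but the proof as written does not supply the weight that removes the loss, and without it the stated constant $(n+|S|)/2$ is not reached.
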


We need yet another well-known result by Freiman.
\begin{theorem}[Freiman, see \protect{\cite[Theorem 1.9]{b:f1}}]\label{t:3n-3}
Let $A$ be a finite set of integers such that $\min A=0$ and $\gcd A=1$.
Write $n:=|A|$ and $l:=\max A$. Then
   $$ |2A| \ge \min \{ l, 2n-3 \} + n. $$
\end{theorem}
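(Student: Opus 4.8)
The plan is to recast the claimed bound as $|2A|\ge\min\{l+n,\,3n-3\}$ and split on the size of $|2A|$. If $|2A|\ge 3n-3$ there is nothing to prove, so I would focus on the range $|2A|\le 3n-4$, where the target is $|2A|\ge l+n$. In this range I would invoke the structural half of Freiman's ``$3k-4$ theorem'' (which is essentially the content of \cite[Theorem 1.9]{b:f1}): writing $|2A|=2n-1+b$ with $0\le b\le n-3$ — the inequality $|2A|\ge 2n-1$ being the trivial lower bound — the set $A$ must lie in an arithmetic progression with at most $n+b$ terms. Since $0\in A$ and $\gcd A=1$, any such progression has common difference $1$, hence is an interval $[0,m]$ with $m+1\le n+b$; therefore $l\le m\le n+b-1=|2A|-n$, which is exactly what is needed.

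If one prefers a self-contained argument to a citation, I would prove the structural statement by induction on $n=|A|$, the cases $n\le 3$ being a direct computation. For the inductive step write $A=\{0=a_1<\dots<a_n=l\}$, put $A^-:=A\setminus\{a_n\}$, $l^-:=a_{n-1}$, and record the identity
\[
  |2A| = |2A^-| + n - \theta, \qquad \theta:=|2A^-\cap(a_n+A^-)|,
\]
which holds because $2A=2A^-\cup(a_n+A^-)\cup\{2a_n\}$ with $2a_n$ lying outside the first two pieces. One checks that $a_n+a_{n-1}$ (being $>2l^-$) never belongs to $2A^-\subseteq[0,2l^-]$, so $\theta\le n-2$ unconditionally; moreover if $d:=\gcd A^->1$ then $d\nmid a_n$ (else $\gcd A=d$), so $2A^-$ and $a_n+A^-$ occupy distinct residue classes mod $d$, giving $\theta=0$ and hence $|2A|=|2A^-|+n\ge(2n-3)+n=3n-3$, which lands us in the trivial case. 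Thus one may assume $\gcd A^-=1$ and apply the induction hypothesis to $A^-$.

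What remains is a sharper grip on $\theta$: each element of $2A^-\cap(a_n+A^-)$ has the form $a_n+a$ with $a\in A^-$ and $a_n+a\in 2A^-$, so it sits in the ``top part'' $2A^-\cap[a_n,2l^-]$, whose sparseness is precisely what the inductive description of $A^-$ controls (a long block of translates $a_n+a_i$ simply cannot be squeezed into the thin upper end of $2A^-$ unless $l$ is close to $l^-$). Turning this observation into the required inequality $l\le|2A|-n$ calls for a short case analysis, mainly of the configurations in which $A$ is dense near its largest element (typically $a_n-1\in A$) — and this is the well-known technical heart of the $3k-4$ theorem and the step I expect to be the main obstacle.
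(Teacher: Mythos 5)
The paper offers no proof of this statement at all: it is quoted as a known result of Freiman, with \cite[Theorem 1.9]{b:f1} as the reference. Your first paragraph is therefore on exactly the same footing as the paper. The deduction there is correct: if $|2A|=2n-1+b\le 3n-4$, the structural form of Freiman's theorem places $A$ in an arithmetic progression of at most $n+b$ terms; since $0\in A$ and $\gcd A=1$ force the common difference to be $1$, and $\min A=0$, one gets $l\le n+b-1=|2A|-n$, i.e.\ $|2A|\ge l+n$, which together with the trivial case $|2A|\ge 3n-3$ gives the stated bound. This is the standard equivalence between the ``direct'' and ``structural'' formulations of the $3k-4$ theorem, and as a citation-plus-reformulation it is a legitimate proof in the same spirit as the paper's treatment.

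Your self-contained inductive sketch, however, is not a proof, and you say so yourself. The bookkeeping is sound: the identity $|2A|=|2A^-|+n-\theta$, the observation $a_n+a_{n-1}>2a_{n-1}=\max 2A^-$ giving $\theta\le n-2$, and the reduction to $\gcd A^-=1$ via the residue-class argument are all correct. But the trivial bound $\theta\le n-2$ only yields $|2A|\ge|2A^-|+2$, which cannot close the induction in either branch: when $|2A^-|\ge 3n-6$ it gives $3n-4$ rather than $3n-3$, and when $|2A^-|\ge l^-+n-1$ it gives $l^-+n+1$, which says nothing when $a_n$ is far above $a_{n-1}$. The missing ingredient --- showing that a large intersection $2A^-\cap(a_n+A^-)$ forces $a_n$ to sit close to $a_{n-1}$, or some equivalent compression/induction-on-diameter device --- is precisely the substance of Freiman's theorem, so the backup argument does not add anything beyond the citation. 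If you intend the proof to rest on the citation, state that cleanly and drop the sketch; if you intend it to be self-contained, the hard step still has to be done.
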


Now we quote a theorem by the present author which extends Theorem
\reft{3n-3} to the case of $h$-fold set addition for any positive integer
$h$.
\begin{theorem}[Lev \protect{\cite[Corollary 1]{b:l3}}]\label{t:hA}
Let $A$ be a finite set of integers such that $\min A=0$ and $\gcd A=1$.
Write $n:=|A|,\ l:=\max A$, and $\kap:=\lfl (l-1)/(n-2)\rfl$. Then for any
integer $h\ge 1$ we have
   $$ |hA| \ge \begin{cases}
                  \frac{h(h+1)}2\,(n-2)+h+1\ &\text{if}\ \, h\le\kap, \\
                  \frac{\kap(\kap+1)}2\,(n-2)+\kap+1+(h-\kap)l
                                           \ &\text{if}\ \, h\ge\kap.
               \end{cases} $$
\end{theorem}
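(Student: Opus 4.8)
The plan is to reduce everything to a single increment inequality and then telescope. Concretely, I would aim to prove that for every integer $h\ge 1$ one has
$|hA| \ge |(h-1)A| + \min\{h(n-2)+1,\,l\}$,
where $0A:=\{0\}$ and $(h-1)A\seq hA$ because $0\in A$. Granting this, summation over $j=1\longc h$ gives $|hA|\ge 1+\sum_{j=1}^{h}\min\{j(n-2)+1,\,l\}$; since the inequality $j(n-2)+1\le l$ holds precisely when $j\le\kap$, the sum breaks into exactly the two ranges of the theorem and evaluates to $\frac{h(h+1)}2(n-2)+h+1$ when $h\le\kap$ and to $\frac{\kap(\kap+1)}2(n-2)+\kap+1+(h-\kap)l$ when $h\ge\kap$. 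Thus the entire content sits in the increment inequality.

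The base cases are encouraging. For $h=1$ it reads $|A|-1\ge\min\{n-1,l\}=n-1$ (an equality, as $l\ge n-1$), and for $h=2$ it reads $|2A|\ge n+\min\{2n-3,l\}$, which is precisely Freiman's Theorem \reft{3n-3}. This suggests proving the increment inequality by induction on $h$, feeding Theorem \reft{3n-3} (or a two-distinct-sets sharpening of it) into a step applied to the pair $\big((h-1)A,A\big)$. To see where the new elements of $hA$ live, I would look at the top end: putting $\tilde A:=l-A$ (which again has $\min 0$, $\max l$, $\gcd 1$, and $|\tilde A|=n$) one has the reflection identity $hl-hA=h\tilde A$; and since $(h-1)A\seq[0,(h-1)l]$ meets $\big[(h-1)l,hl\big]$ only at $(h-1)l$, it follows that $|hA|-|(h-1)A|\ge|hA\cap\big((h-1)l,hl\big]|=|h\tilde A\cap[0,l]|-1$. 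So it suffices to show $|h\tilde A\cap[0,l]|\ge\min\{h(n-2)+2,\,l+1\}$ for every admissible $\tilde A$ — i.e.\ each extra summand contributes at least $n-2$ new values inside the fixed window $[0,l]$ until the window is exhausted. I would attack this by a secondary induction on $h$, using $h\tilde A\cap[0,l]\supseteq\bigcup_{b\in\tilde A}\big(((h-1)\tilde A\cap[0,l-b])+b\big)$ and arguing that the shifts by $b>0$ bring in enough genuinely new points.

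The hard part is exactly this last step: securing the clean constant $n-2$ per increment in the intermediate regime, where $\min\{h(n-2)+1,l\}=h(n-2)+1<l$. A crude two-set bound of the form $|(h-1)A+A|\ge|(h-1)A|+\min\{l,\ldots\}$ is useless here, since $|(h-1)A|$ is already large and such a bound would deliver only the weaker increment $l$; the point is that for $h\le\kap$ all the new elements of $hA$ sit in the thin ``triangular cap'' near $hl$, so the argument must exploit the self-similar cap structure of $(h-1)A$ there rather than its bulk. This is precisely the structural information carried by the structure theorem for $h$-fold sumsets in \refb{l3}, of which the displayed estimate is Corollary~1; a from-scratch proof would essentially have to re-derive that description, tracking carefully how the top cap of $(h-1)A$ expands when one more copy of $A$ is added.

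Everything else is routine: verifying the reflection identity $hl-hA=h\tilde A$, carrying out the two closed-form summations in the telescoping step, and dealing with the degenerate case $n=2$ (which forces $A=\{0,1\}$, $l=1$) as well as the boundary value $h=\kap$, where the two branches of the estimate agree.
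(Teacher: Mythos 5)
This statement is not proved in the paper at all: it is imported verbatim as Corollary~1 of \cite{b:l3}, so there is no internal argument to compare yours against. Measured against the source, your telescoping reduction is exactly how that corollary is obtained there: the increment inequality $|hA|\ge|(h-1)A|+\min\{h(n-2)+1,\,l\}$ is precisely Theorem~1 of \cite{b:l3}, and your bookkeeping (the threshold $j\le\kap$ for $j(n-2)+1\le l$, the two closed-form sums, the agreement of the branches at $h=\kap$) is correct. The reflection identity $hl-hA=h\tilde A$ and the resulting reformulation $|h\tilde A\cap[0,l]|\ge\min\{h(n-2)+2,\,l+1\}$ are also fine.

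The genuine gap is that this increment inequality is the entire mathematical content of the theorem, and you do not prove it. Your reflection to the ``top cap'' is only a restatement of the problem, not progress on it: the inclusion $h\tilde A\cap[0,l]\supseteq\bigcup_{b\in\tilde A}\bigl(((h-1)\tilde A\cap[0,l-b])+b\bigr)$ gives no usable lower bound without a quantitative reason why the shifted copies cannot largely coincide, and ``the shifts by $b>0$ bring in enough genuinely new points'' is exactly the assertion to be established. Freiman's Theorem\reft{3n-3} settles only $h=2$; for $2<h\le\kap$ nothing in your outline forces the per-step gain of $n-2$, and you acknowledge as much when you say a from-scratch argument ``would essentially have to re-derive'' the structure theorem of \cite{b:l3}. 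As it stands, the proposal is a correct derivation of the corollary from an unproved key lemma, i.e.\ a reduction rather than a proof.
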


\begin{corollary}\label{c:hA}
Let $A$ be a finite set of $n:=|A|\ge 3$ integers such that $\min A=0$ and
 $\gcd A=1$. Suppose that $l:=\max A\le 2n-4$. Then for any
integer $h\ge 1$ we have
   $$ |hA| \ge n+(h-1)l. $$
\end{corollary}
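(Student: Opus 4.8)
The plan is to derive Corollary~\refc{hA} directly from Theorem~\reft{hA} by a careful bookkeeping argument on the two cases appearing there, using the hypothesis $l\le 2n-4$ to show that the parameter $\kap=\lfl(l-1)/(n-2)\rfl$ is at most $1$. First I would observe that $l\le 2n-4$ gives $l-1\le 2n-5<2(n-2)$, so $(l-1)/(n-2)<2$ and hence $\kap\le 1$; on the other hand $\kap\ge 1$ is immediate since $l\ge 1$ (as $\gcd A=1$ and $|A|\ge 3$ force $l\ge 2$, indeed $l\ge n-1\ge 2$), so in fact $\kap=1$. This collapses the two branches of Theorem~\reft{hA} into manageable shape.

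With $\kap=1$ in hand, I would split into the two regimes of the theorem. For $h=1$ (the only value with $h\le\kap$), the bound reads $|1\cdot A|\ge \frac{1\cdot 2}{2}(n-2)+1+1=n$, which is exactly $n+(h-1)l$ with $h=1$, so the $h=1$ case is trivially an equality of the claimed form. For $h\ge 1=\kap$ (the second branch, which overlaps at $h=1$), the theorem gives
\begin{equation*}
  |hA|\ge \frac{\kap(\kap+1)}{2}(n-2)+\kap+1+(h-\kap)l
        = (n-2)+2+(h-1)l = n+(h-1)l,
\end{equation*}
which is precisely the desired inequality. So once $\kap=1$ is established, the corollary is immediate from Theorem~\reft{hA}, and the two computations above are routine substitutions.

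The only point requiring a little care — and the place I would regard as the main (minor) obstacle — is pinning down $\kap$ exactly, in particular ensuring $\kap\ge 1$ so that the "$h\ge\kap$" branch of Theorem~\reft{hA} applies for every $h\ge 1$. This rests on the observation that under the hypotheses $\min A=0$, $\gcd A=1$, and $n\ge 3$ we must have $l\ge n-1\ge 2$, hence $l-1\ge n-2$ and $\kap=\lfl(l-1)/(n-2)\rfl\ge 1$; combined with $\kap\le 1$ from $l\le 2n-4$ this yields $\kap=1$. Everything else is a direct quotation of Theorem~\reft{hA} followed by arithmetic, so no genuine difficulty arises beyond this case analysis.
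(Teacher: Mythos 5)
Your proof is correct and is exactly the derivation the paper intends (the paper states Corollary\refc{hA} immediately after Theorem\reft{hA} without writing out the argument): the hypotheses force $n-1\le l\le 2n-4$, hence $\kap=\lfl(l-1)/(n-2)\rfl=1$, and substituting into the $h\ge\kap$ branch gives $(n-2)+2+(h-1)l=n+(h-1)l$. No issues.
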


Finally, we present a result which describes the \emph{structure} of the sets
$hA$ and shows that under certain conditions, these sets contain long blocks
of consecutive integers.
\begin{theorem}[Lev, reformulation of {\cite[Theorem 1]{b:l2}}]\label{t:lint}
Let $A$ be a finite set of integers such that $\min A=0$ and $\gcd A=1$.
Write $n:=|A|,\ l:=\max A$, and $\kap:=\lfl(l-1)/(n-2)\rfl$. Then for any
integer $h\ge 2\kap$ we have
  $$ [(2l-(\kap+1)(n-2)-2)\kap, hl-(2l-(\kap+1)(n-2)-2)\kap] \seq hA. $$
\end{theorem}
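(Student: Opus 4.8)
The plan is to read this statement as essentially a transcription of \cite[Theorem~1]{b:l2}, so that the work reduces to matching notation and verifying that the endpoints of the guaranteed interval and the threshold on $h$ agree with what is proved there. I would first recall the shape of \cite[Theorem~1]{b:l2}: for a finite integer set $A$ with $\min A=0$, $\gcd A=1$, $n:=|A|$, $l:=\max A$ and $\kap:=\lfl(l-1)/(n-2)\rfl$, it exhibits inside $hA$ a block of consecutive integers symmetric about $hl/2$, whose lower endpoint $c_h$ is given by an expression that is \emph{piecewise} in $h$, in exact parallel with the two-case estimate for $|hA|$ in Theorem\reft{hA}. The remaining task is then purely arithmetical: check that for every integer $h\ge 2\kap$ the quantity $c_h$ has already reached its stable value $(2l-(\kap+1)(n-2)-2)\kap$, i.e.\ that the cross-over in the piecewise formula occurs no later than $h=2\kap$. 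This amounts to one inequality in $h,\kap,l,n$, to be combined with the elementary bounds $\kap\ge 1$ and $(\kap-1)(n-2)<l-1\le\kap(n-2)$; the latter gives, in particular, $c_h<l\kap$.

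For completeness --- and since the present paper relies on the \emph{conclusion} of \cite[Theorem~1]{b:l2} --- I would also sketch how that theorem is obtained. The engine is an induction on $h$. The base case, taken at $h$ of size comparable to $\kap$ (ultimately $h=2\kap$), rests on the Freiman-type structural results for $2$-fold and iterated sumsets --- Theorem\reft{3n-3} and its $h$-fold generalisation --- which control not only $|hA|$ but the positions of the ``holes'' of $hA$ near the endpoints $0$ and $hl$. For the inductive step one uses $\{0,l\}\seq A$ to write $(h+1)A\supseteq hA+\{0,l\}$; if $hA$ already contains $[c_h,hl-c_h]$, then the two translates $[c_h,hl-c_h]$ and $[c_h+l,(h+1)l-c_h]$ overlap as soon as $2c_h\le (h-1)l$, and since $c_h<l\kap$ this holds once $h\ge 2\kap+1$ --- precisely the point at which the hypothesis $h\ge 2\kap$, rather than merely $h\ge\kap$, is needed. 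The reflection $x\mapsto hl-x$, which carries $hA$ onto $h(l-A)$, lets one deduce control of the top of $hA$ from control of its bottom, and the counting bound of Theorem\reft{hA} caps the total number of integers missing from $hA$, keeping the two ``holes'' short.

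The main obstacle is the base case together with the exact determination of the constant $(2l-(\kap+1)(n-2)-2)\kap$: the counting estimate alone does not localise the missing integers at the two ends of $[0,hl]$, so one genuinely needs the $3n-3$-type structural input to pin down the bottom of $hA$, namely the small representable sums built from the least elements of $A$. Once that is secured, propagating the symmetric interval upward in $h$ is routine. For the purposes of the present paper, however, none of this need be redone: it suffices to invoke \cite[Theorem~1]{b:l2} and carry out the short parameter check described in the first paragraph.
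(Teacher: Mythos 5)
The paper offers no proof of this statement: it is imported verbatim, as a ``reformulation'' of \cite[Theorem 1]{b:l2}, which is exactly the treatment you propose in your first and last paragraphs (cite the external theorem and verify that the interval endpoints and the threshold $h\ge 2\kappa$ match under the present notation). Your additional sketch of how \cite[Theorem 1]{b:l2} is itself established goes beyond anything the paper attempts and is not needed for its argument, so your proposal is in essence the same as the paper's.
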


\section{More auxiliary results}\label{s:lemmas}

In this section we establish two more results needed to complete the proof of
the Main Theorem. The first one is a ``continuous version'' of Theorem
\reft{lint}.
\begin{theorem}\label{t:clint}
Let $A\seq(0,1)$ be an open set such that $\inf A=0$ and $\sup A=1$. Write
$\alp:=\mes A$ and $\kap:=\lfl1/\alp\rfl$. Then for any integer $h\ge2\kap$
we have
  $$ ((2-(\kap+1)\alp)\kap,h-(2-(\kap+1)\alp)\kap) \seq hA. $$
\end{theorem}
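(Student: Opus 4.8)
The plan is to deduce the continuous statement from its discrete counterpart, Theorem~\ref{t:lint}, by a standard discretization argument. First I would fix a small parameter $\eps>0$ and approximate $A$ from below by a finite union of intervals with rational endpoints having a common denominator $N$; concretely, choose $N$ large and let $A_\eps$ be a union of intervals of the form $(j/N,(j+1)/N)$ contained in $A$, chosen so that $0$ and $1$ remain in the closure (i.e. $(0,1/N)$ and $(1-1/N,1)$ are among the chosen intervals, which is possible since $\inf A=0$ and $\sup A=1$ and $A$ is open), and so that $\mes A_\eps>\alp-\eps$. Passing to the index set $B:=\{j\in\{0,\dots,N-1\}\colon (j/N,(j+1)/N)\seq A_\eps\}\seq\Z$, we have $\min B=0$, $\max B=N-1$, and $|B|/N=\mes A_\eps$. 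The key point is that $h$-fold sumsets transfer cleanly: if $j_1+\dots+j_h\in hB$ then $((j_1+\dots+j_h)/N,(j_1+\dots+j_h+h)/N)\seq hA_\eps\seq hA$, so a block of consecutive integers in $hB$ produces an interval in $hA$.

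The second step is to feed $B$ into Theorem~\ref{t:lint}. Here one must be slightly careful: $\gcd B$ need not equal $1$. To fix this, either replace $B$ by $B/\gcd(B)$ (noting $\gcd(B)\mid N-1$, so the rescaling is harmless and only improves the density), or simply observe that one may always add the element $1$ to $A_\eps$'s index set — more cleanly, absorb this into the choice of $A_\eps$ by additionally requiring $(1/N,2/N)\seq A_\eps$, which forces $1\in B$ and hence $\gcd B=1$ at the cost of replacing $\alp-\eps$ by $\alp-\eps$ again (no loss, for $N$ large). With $n:=|B|$, $l:=N-1$, and $\kappa_B:=\lfl(l-1)/(n-2)\rfl$, Theorem~\ref{t:lint} gives, for every $h\ge 2\kappa_B$,
\[
  [(2l-(\kappa_B+1)(n-2)-2)\kappa_B,\ hl-(2l-(\kappa_B+1)(n-2)-2)\kappa_B]\seq hB.
\]
Dividing by $N$ and using the transfer principle, this yields an interval inside $(1/N)\cdot hB + (0,h/N)\seq hA$ whose endpoints, as $\eps\to0$ (so $N\to\infty$, $n/N\to\mes A_\eps\to\alp$, $l/N\to1$, and $\kappa_B\to\kap=\lfl1/\alp\rfl$), converge to $(2-(\kap+1)\alp)\kap$ and $h-(2-(\kap+1)\alp)\kap$ respectively. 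Since $hA$ is open, any fixed compact subinterval of $((2-(\kap+1)\alp)\kap,\ h-(2-(\kap+1)\alp)\kap)$ is eventually captured, giving the claimed inclusion.

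The main obstacle I anticipate is the bookkeeping around $\kappa$: one needs $\kappa_B\to\kap$ and the threshold condition $h\ge 2\kappa_B$ to be compatible with the fixed $h\ge 2\kap$ in the statement. Since $(l-1)/(n-2)\to 1/\alp$ which may be irrational or may land exactly on an integer, $\kappa_B=\lfl(l-1)/(n-2)\rfl$ could a priori be $\kap$ or $\kap-1$ (the troublesome case being $1/\alp\in\Z$, where the floor could drop). In the generic case $1/\alp\notin\Z$ we have $\kappa_B=\kap$ for all large $N$ and $h\ge2\kap\ge2\kappa_B$ outright. In the borderline case $1/\alp\in\Z$ one can either handle it by a separate direct argument (here $\kap=1/\alp$ exactly and the target interval is $((2-(\kap+1)\alp)\kap,\dots)=((1-\alp)\kap,\dots)$, matching Example~\ref{x:first}), or note that $\kappa_B\le\kap$ always, so $h\ge2\kap\ge2\kappa_B$ holds regardless, and the inequalities in Theorem~\ref{t:lint} only improve when $\kappa_B$ is smaller — a short monotonicity check on the quadratic $2l-(\kappa+1)(n-2)-2$ as a function of $\kappa$ finishes it. A secondary, purely technical point is ensuring that the finitely many ``boundary'' intervals forced into $A_\eps$ (the ones guaranteeing $\inf=0$, $\sup=1$, and $\gcd=1$) do not obstruct the density bound $\mes A_\eps>\alp-\eps$; since each contributes $O(1/N)$ this is automatic for $N$ large.
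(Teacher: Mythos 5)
Your overall strategy --- discretize $A$ onto a grid of mesh $1/N$, apply Theorem\reft{lint} to the resulting integer set, and pass to the limit $N\to\infty$ --- is exactly the strategy of the paper's own proof, and your bookkeeping around $\kappa_B$ (the threshold $h\ge2\kappa_B$ and the monotonicity of $(2-(\kappa+1)\alp)\kappa$ in $\kappa$ up to $\kap$) is sound. There is, however, one step that is genuinely false as stated: you claim that the grid cells $(0,1/N)$ and $(1-1/N,1)$ (and later $(1/N,2/N)$, to force $\gcd B=1$) can be kept inside the approximant ``since $\inf A=0$ and $\sup A=1$ and $A$ is open''. An open set with $\inf A=0$ and $\sup A=1$ need not contain any one-sided neighbourhood of $0$ or of $1$: $A$ may consist of intervals accumulating at the endpoints, e.g.\ $\bigcup_{j\ge2}\bigl((1/j)-\delta_j,\,1/j\bigr)\cup\bigl(1-1/j,\,1-1/j+\delta_j\bigr)$ with the $\delta_j$ tiny. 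For such $A$ no cell $(j/N,(j+1)/N)$ with $j=0$ or $j=N-1$ lies in $A$ for any $N$, so your $B$ has $\min B>0$ and $\max B<N-1$; the interval produced in $hA$ is then shifted and truncated, and without further argument you do not reach the claimed endpoints $(2-(\kap+1)\alp)\kap$ and $h-(2-(\kap+1)\alp)\kap$.

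The gap is repairable, and the paper's proof shows how: it first proves the theorem for $A$ a \emph{finite} union of intervals --- in that case $\sup A=1$ really does force $(1-\delta,1)\seq A$, so the top grid cell is available, while at the bottom the element $0$ is adjoined to the discrete set \emph{artificially}, the hypothesis $\inf A=0$ being used only to replace a phantom summand $0$ by arbitrarily small genuine elements of $A$ when translating back --- and only then treats general open $A$ by a second approximation, applying the rescaled Corollary\refc{clint} to a finite union $A'\seq A$ with $\inf A'<\eps$, $\sup A'>1-\eps$, $\mes A'>\alp-\eps$ and letting $\eps\to0$. You need one of these two devices, or an equivalent one (e.g.\ tracking $\min B/N\to0$ and $\max B/N\to1$ explicitly through the endpoint computation). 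The $\gcd$ worry, by contrast, is vacuous: since $\mes A>0$, the set $A$ contains an interval of length exceeding $2/N$ for all large $N$, so $B$ contains two consecutive integers and $\gcd B=1$ automatically, exactly as in the paper.
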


A simple rescaling yields the following corollary.
\begin{corollary}\label{c:clint}
Let $A$ be a non-empty bounded open set of real numbers. Write
  $$ v:=\inf A,\ w:=\sup A,\ \lam:=w-v,\ \alp:=\mes A,
                                  \ \kap:=\lfl\lam/\alp\rfl. $$
Then for any integer $h\ge 2\kap$ we have
  $$ (vh+(2\lam-(\kap+1)\alp)\kap, wh-(2\lam-(\kap+1)\alp)\kap) \seq hA. $$
\end{corollary}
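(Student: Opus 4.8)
The plan is to reduce Corollary \refc{clint} to Theorem \reft{clint} by an affine change of variables, exactly the kind of rescaling already used implicitly in Example \refx{third}. Given the bounded non-empty open set $A$ with $v=\inf A$, $w=\sup A$, and $\lam=w-v>0$, I would introduce the normalized set $B:=\{(a-v)/\lam\colon a\in A\}$. Then $B$ is an open subset of $(0,1)$ with $\inf B=0$ and $\sup B=1$, and its measure is $\beta:=\mes B=\alp/\lam$, so that $\lfl1/\beta\rfl=\lfl\lam/\alp\rfl=\kap$.

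Next I would track how $hA$ transforms under this affine map. Since each element of $A$ is of the form $v+\lam b$ with $b\in B$, a sum of $h$ elements of $A$ is $hv+\lam(b_1\longp b_h)$; hence $hA=hv+\lam\cdot(hB)$, i.e. $hA=\{hv+\lam y\colon y\in hB\}$. Applying Theorem \reft{clint} to $B$ (legitimate since $h\ge2\kap$ and $\kap=\lfl1/\beta\rfl$) gives
  $$ \big((2-(\kap+1)\beta)\kap,\ h-(2-(\kap+1)\beta)\kap\big)\seq hB. $$
Substituting $\beta=\alp/\lam$ and then applying the map $y\mapsto hv+\lam y$ to both sides turns the left endpoint $(2-(\kap+1)\alp/\lam)\kap$ into
  $$ hv+\lam(2-(\kap+1)\alp/\lam)\kap = vh+(2\lam-(\kap+1)\alp)\kap, $$
and similarly the right endpoint $h-(2-(\kap+1)\beta)\kap$ becomes $wh-(2\lam-(\kap+1)\alp)\kap$ (using $hv+\lam h=hw$). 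This is precisely the claimed interval, and it is contained in $hA$ because the affine map carries $hB$ into $hA$ and carries intervals to intervals preserving inclusion.

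There is essentially no obstacle here: the only things to check are that the normalization $B$ satisfies the hypotheses of Theorem \reft{clint} — which is immediate — and that the arithmetic of the endpoint substitution is carried out correctly, which is the routine computation I would not spell out in full. The one minor point worth a sentence in the write-up is that $\lam>0$ (so the map is invertible) and that $A$ being bounded guarantees $w<\infty$; openness of $A$ passes to openness of $B$ since affine maps are homeomorphisms.
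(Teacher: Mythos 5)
Your proof is correct and is exactly the rescaling the paper has in mind (the paper states the corollary follows from Theorem\reft{clint} by ``a simple rescaling'' without writing it out). The normalization $B=\{(a-v)/\lam\colon a\in A\}$, the identity $hA=hv+\lam\cdot(hB)$, and the endpoint arithmetic all check out.
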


\begin{proof}[Proof of Theorem \reft{clint}]
First we prove the assertion assuming that $A$ is a union of a \emph{finite}
number, say $m$, of open intervals. For a sufficiently large integer $l$ (it
will be seen shortly what is sufficient for our purposes) define
  $$ A_l := \{0\} \cup \{z\in\Z\colon [z/l,(z+1)/l)\seq A\} $$
and set $n:=|A_l|$, so that $A_l\seq[0,l-1]$ and
\begin{equation}\label{e:nl}
  \alp l-2m+1 \le n < \alp l+1.
\end{equation}
We have $\min A_l=0,\ \max A_l=l-1$ since $\sup A=1$, and $\gcd A_l=1$ since
$A_l$ contains consecutive integers.

Let $k:=\lfl (l-2)/(n-2) \rfl$. By \refe{nl},
  $$ k \le \frac{l-2}{n-2}<\frac1\alp\,\frac{n+2m}{n-2}
                                                    = \frac1\alp+O(1/n) $$
with the implicit constant depending on $m$ and $\alp$, and it follows that
$k\le \kap$. On the other hand, we claim that $k\ge\kap$. Indeed, this is
clear if $\alp>1/2$ (where $\kap=1$), while for $\alp\le1/2$ from \refe{nl}
we get $n<l/2+1$ whence $l>2n-2$ and therefore
  $$ \frac{l-2}{n-2}>\frac l{n-1} > \frac 1\alp \ge \kap. $$
Thus, $k=\kap$ and by Theorem \reft{lint} as applied to the set $A_l$ we have
  $$ [(2l-(\kap+1)(n-2)-4)\kap,h(l-1)-(2l-(\kap+1)(n-2)-4)\kap] \seq hA_l, $$
hence
  $$ \left[\left(2-\frac{(\kap+1)(n-2)+4}l\right)\kap,
      h-\frac1l-\left(2-\frac{(\kap+1)(n-2)+4}l\right)\kap\right] \seq hA. $$
It remains to notice that
  $$ (\kap+1)(n-2)+4 > (\kap+1)(\alp l-2m-1) = (\kap+1)\alp l + O(1). $$

Now that Theorem \reft{clint}, and therefore Corollary \refc{clint}, are
established when $A$ is a finite union of intervals, we turn to the general
case. Evidently, for any set $A$ satisfying the assumptions of the theorem
and any $\eps>0$ we can find a subset $A'\seq A$ which is a union of a finite
number of intervals and such that $v':=\inf A'<\eps,\ w':=\sup A'>1-\eps$,
and $\alp':=\mes A'>\alp-\eps$. If $\eps$ is small enough then $\lfl
1/\alp'\rfl=\lfl1/\alp\rfl$ and applying Corollary \refc{clint} to the set
$A'$ we get
  $$ (v'h+(2(w'-v')-(\kap+1)\alp')\kap, w'h-(2(w'-v')-(\kap+1)\alp')\kap)
                                                        \seq hA' \seq hA. $$
The assertion follows.
\end{proof}

Another result of this section can be considered as a strengthening of
Corollary \refc{CD} for the special case of equal set summands. Its proof
roughly follows the lines of a well-known argument by Freiman (see
\cite[Theorem~2.1]{b:f1}).

For a positive integer $p$ and an integer set $S$, by $S\mmod p$ we denote
the canonical image of $S$ in the residue group $\Z/p\Z$. The set of non-zero
elements of $\Z/p\Z$ is denoted $(\Z/p\Z)^\times$.
\begin{lemma}\label{l:kp}
For any integer $k\ge 8$ there exists an integer $p_0$ with the following
property. Let $p>p_0$ be a prime number and suppose that a set $A\seq\Z/p\Z$
satisfies $n:=|A|>p/(k+1)$ and $kA\neq\Z/p\Z$. Then $A$ is contained in an
arithmetic progression of at most $(p-2n)/(k-2)+1$ terms.
\end{lemma}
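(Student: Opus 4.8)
The plan is to follow the Freiman-style argument for the ``$3k-4$'' theorem, adapted to the cyclic setting where we control $kA$ rather than $2A$. First I would pass from $\Z/p\Z$ to the integers: since $kA\neq\Z/p\Z$, Corollary \refc{CD} gives $k|A|\le p+k-2$, so $kn\le p+k-2$, which will be the basic inequality feeding all subsequent estimates. Lift $A$ to a set $\widetilde A\seq[0,p-1]$ of integers (one representative per residue class), and consider the $p$ rotations $\widetilde A+j$ for $j\in\Z/p\Z$; after a rotation we may assume $\widetilde A\seq[0,p-1]$ has its ``gap structure'' normalized — concretely, order the cyclic gaps of $A$ and rotate so that the largest gap straddles the point $p-1/p$, i.e. $\widetilde A\seq[0,L]$ with $L=p-1-(\text{largest gap})+1$ as small as possible. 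Then $\gcd(\widetilde A)$ need not be $1$, but dividing out the gcd (which is coprime to $p$, hence invertible mod $p$) we may assume $\gcd\widetilde A=1$ and $\min\widetilde A=0$; this replaces $A$ by an affine image, which does not affect the conclusion (an arithmetic progression maps to an arithmetic progression of the same length).

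Next I would run the containment $k\widetilde A\pmod p\supseteq (kA\bmod p\text{'s lift})$, but more usefully: because $kA\neq\Z/p\Z$, there is a residue missing from $kA$, so $k\widetilde A$ (as a set of integers in $[0,kL]$) omits every integer congruent to that residue mod $p$. If $L\le p/2$ or so then $k\widetilde A\seq[0,kL]$ with $kL<kp/2$, and I can hope to apply the integer results — specifically Theorem \reft{hA} or Corollary \refc{hA} — to get a lower bound on $|k\widetilde A|$ in terms of $n$ and $L$, and compare with the upper bound coming from ``$k\widetilde A$ avoids a congruence class'', namely $|k\widetilde A|\le kL - \lfloor kL/p\rfloor + O(1)\le kL(1-1/p)+O(1)$. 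Setting $\kap':=\lfl(L-1)/(n-2)\rfl$, Theorem \reft{hA} gives, when $k\ge\kap'$, the bound $|k\widetilde A|\ge \frac{\kap'(\kap'+1)}2(n-2)+\kap'+1+(k-\kap')L$. The hypothesis $n>p/(k+1)$ together with $kn\le p+k-2$ pins $n$ down to within a bounded window, which forces $\kap'$ to be essentially $(L-1)\alpha/\lambda$-sized; the arithmetic then has to show that unless $L\le (p-2n)/(k-2)+1$, these two bounds on $|k\widetilde A|$ contradict each other. In the extremal case $\kap'=1$ (i.e. $L\le n-1$, $A$ an arithmetic progression of $n$ terms) one checks $L=n-1\le (p-2n)/(k-2)+1$ reduces to $n(k+1)\le p+k-2$, which is exactly $kn\le p+k-2$ up to the ``$+1$'', i.e. forced. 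So the argument is: either $\kap'=1$, giving the progression directly with the right length bound, or $\kap'\ge 2$ and one derives a contradiction for $L$ too large.

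The main obstacle — and the reason for the hypotheses $k\ge 8$ and $p>p_0$ — will be the bookkeeping of lower-order terms when $\kap'\ge 2$: the $O(1)$ errors from lifting and from the avoided-congruence-class count must be absorbed, and one needs $k$ large enough (and then $p$ large depending on $k$) that the quadratic-in-$\kap'$ gain $\frac{\kap'(\kap'+1)}2(n-2)$ in Theorem \reft{hA} dominates the slack $kL/p + O(1)$ in the upper bound. I expect the cleanest route is: assume for contradiction $L>(p-2n)/(k-2)+1$, i.e. $(k-2)(L-1)>p-2n$, combine with $n>p/(k+1)$ to get a clean inequality of the form $(k-2)(L-1)+2n>p\ge (kn-k+2)$, rearrange to bound $L$ from below by something like $n + (n-1)/(k-2)$, conclude $\kap'=\lfl(L-1)/(n-2)\rfl\ge 1$ with room to spare, and then feed this into the gap between the Theorem \reft{hA} lower bound and the $\le kL(1-1/p)+O(1)$ upper bound; the contradiction emerges once $k\ge 8$ makes the coefficient comparison work. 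I would also need to handle the case $L>p/2$ separately — there the set $A$ has no large gap, so $n$ is close to $p/2$, but then $n>p/(k+1)$ is automatic and $kA=\Z/p\Z$ would follow from Theorem \reft{macbeath}-type density (or directly from Cauchy–Davenport since $kn\ge kp/2>p$ for $k\ge 3$), contradicting $kA\neq\Z/p\Z$; so that case is vacuous.
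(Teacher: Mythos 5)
There is a genuine gap, and it is at the very first step: your normalization cannot locate the common difference of the progression, which is the whole content of the lemma. You rotate so that the largest cyclic gap of $A$ sits at the cut point and then ``divide out the gcd'' of the lifted set $\widetilde A$. But the extremal configuration for this lemma is $A=\{0,d,2d,\ldots,(n-1)d\}\mmod p$ with an arbitrary difference $d$; its lift to $[0,p-1]$ is spread essentially uniformly over the whole interval, its largest gap is $O(k)$, so $L$ is close to $p$, and the integer gcd of the lifted set is $1$ because of wraparound, so dividing by the gcd does nothing. Multiplying by $d^{-1}\mmod p$ is not an operation you can discover from the integer lift. Consequently your upper bound $|k\widetilde A|\le kL(1-1/p)+O(1)\approx kp$ is far above the Theorem\reft{hA} lower bound (which is at most roughly $kp/2$ here since $\kap'\approx k$), and no contradiction arises; the argument simply does not constrain such sets. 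For the same reason your closing claim that the case $L>p/2$ is vacuous is false: $L>p/2$ only says the largest gap is under $p/2$, which holds for any roughly equidistributed set of density $\approx 1/(k+1)$ and does not force $n$ near $p/2$.

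The missing idea is a mechanism that detects the hidden dilation $d$. The paper does this with exponential sums: since some $g\notin kA$, Parseval forces a nontrivial Fourier coefficient $|\hA(z)|>k^{-1/(k-2)}n$, and Freiman's Lemma\refl{unif} then places more than $\tfrac{1+\kap}{2}n$ elements of $A$ in a half-progression with difference $d=z^{-1}\mmod p$. Only after that affine change of variables does the rest of your plan (Freiman's $3n-3$ theorem, Cauchy--Davenport applied to repeated sums, Corollary\refc{hA}, and the final bootstrapping to $l\le(p-2n)/(k-2)$) become available; indeed the paper's Steps 2--5 are close in spirit to the comparison of lower and upper bounds you describe. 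Without the Fourier (or some equivalent inverse-theoretic) step, the argument cannot get off the ground.
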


\begin{proof}
We split the argument into several steps.

\mypar For $z\in\Z/p\Z$ set $\hA(z):=\sum_{a\in A} e^{-2\pi i az/p}$ and let
$M:=\max\{|\hA(z)|\colon z\in(\Z/p\Z)^\times\}$. Fix an element $g\in\Z/p\Z$
such that $g\notin kA$. Then
  $$ \sum_{z=0}^{p-1} \big(\hA(z)\big)^k e^{2\pi i\frac{gz}p} = 0 $$
and using Parseval's identity we get
  $$ n^k = - \sum_{z=1}^{p-1} \big(\hA(z)\big)^k e^{2\pi i\frac{gz}p}
               \le \sum_{z=1}^{p-1} \big|\hA(z)\big|^k
      \le M^{k-2} \sum_{z=1}^{p-1} \big|\hA(z)\big|^2 = M^{k-2} n(p-n). $$
It follows that
  $$ M \ge n \left( \frac{n}{p-n}\right)^{1/(k-2)} > \kap n $$
with $\kap:=k^{-1/(k-2)}$. Choose $z\in(\Z/p\Z)^\times$ so that $M=|\hA(z)|$
and therefore $|\hA(z)|>\kap n$. By Lemma \refl{unif} there exists an integer
$u$ such that
  $$ \# \{ a\in A \colon az \in [u,u+(p-1)/2] \mmod p \}
                                                   > \frac{1+\kap}2\,n. $$
Thus, if $d$ is the inverse of $z$ modulo $p$ and if we set $v:=du$ then,
letting $A_1:=\{v,v+d\longc v+((p-1)/2)d\}\cap A$ and $n_1:=|A_1|$, we have
\begin{equation}\label{e:n1}
  n_1>\frac{1+\kap}2\,n.
\end{equation}
Set $B_1:=\{b\in[0,(p-1)/2]\colon v+bd\in A_1\}$. Applying a suitable linear
transformation to $A$, we can assume without violating \refe{n1} that
 $v=0\in A_1$ (whence $\min B_1=0$), $d=1$, and $\gcd(B_1)=1$, and we write
then $l_1:=\max B_1$. Thus $B_1\seq[0,l_1]$ and $A_1=B_1\mmod p$. Evidently,
we have $|B_1|=|A_1|$ and since $l_1\le(p-1)/2$ it is easily seen that
$|2A_1|=|2B_1|$.

\mypar We claim that $l_1<p/6$. First, we prove that $l_1\le 2n_1-4$. For,
assuming $l_1\ge 2n_1-3$, by Theorem \reft{3n-3} we obtain
 $|2A_1|=|2B_1|\ge 3n_1-3$; then in view of
$2A_1+2A_1+(k-4)A\seq kA\neq\Z/p\Z$, applying Corollary \refc{CD} to two
instances of the set $2A_1$ and $k-4$ instances of the set $A$ we get
\begin{multline}\label{e:loc1}
  p+k-4 \ge 2|2A_1|+(k-4)|A| > (6n_1-6)+(k-4)\,n \\
          > (3(\kap+1)+(k-4))\,n - 6 > \frac{k-1+3\kap}{k+1}\,p-6  \\
              =  p + \frac{3\kap-2}{k+1}\,p - 6.
\end{multline}
However, since $3\kap>2$ this is wrong for sufficiently large $p$, a
contradiction.

We see that $l_1\le 2n_1-4$ and applying Theorem \reft{3n-3} once again we
get
\begin{equation}\label{e:2B1}
  |2A_1|=|2B_1|\ge n_1+l_1.
\end{equation}
If $k$ is even then from
 $(k/2)(2A_1)\seq kA\neq\Z/p\Z$, applying Corollary \refc{CD} to $k/2$
instances of the set $2A_1$ we get
  $$ p + \frac k2 - 2 \ge \frac k2\,|2A_1| \ge \frac k2\,(n_1+l_1), $$
hence
  $$ l_1 < \frac 2k\,p-n_1+1
        < \left(\frac 2k - \frac{\kap+1}{2(k+1)}\right)\,p+1 < \frac 16\,p, $$
as the expression in the parentheses is smaller than $1/6$ for $k\ge 8$.
Similarly, if $k$ is odd then from $\frac{k-1}2\,(2A_1)+A\seq kA\neq\Z/p\Z$
we obtain
  $$ p + \frac{k+1}2 -2 \ge \frac{k-1}2\,(l_1+n_1)+n $$
which yields
\begin{multline*}
  l_1 < \frac2{k-1}\,p - \left(\frac2{k-1}+\frac{\kap+1}2\right)n + 1 \\
          < \left( \frac2{k-1} - \frac2{k^2-1}
                 - \frac{\kap+1}{2(k+1)} \right) p + 1 \\
             = \left( \frac{2k}{k^2-1} - \frac{\kap+1}{2(k+1)} \right) p + 1
       < \frac16\,p,
\end{multline*}
as the expression in the last pair of parentheses is smaller than $1/6$ for
$k\ge 9$.

\mypar Our next claim is that $A\seq[-l_1,2l_1]\mmod p$; informally, if we
extend the interval $[0,l_1]\mmod p$ (in which $A_1$ resides) by $l_1$ in
both directions, then the resulting interval covers the whole set $A$. This
follows from the observation that if $a\in A$ is \emph{not} contained in this
extended interval, then $2A_1$ and $a+A_1$ are disjoint subsets of $2A$;
hence $|2A|\ge|a+A_1|+|2A_1|\ge 2n_1+l_1\ge 3n_1-1$ by \refe{2B1}; this,
however, leads to a contradiction as in \refe{loc1}: just notice that
$2A+2A+(k-4)A\neq\Z/p\Z$ and apply Corollary \refc{CD} to two instances of
$2A$ and $k-4$ instances of $A$. This shows that $A$ is contained in a set of
$3l_1+1\le (p+1)/2$ consecutive elements of $\Z/p\Z$, and applying again a
suitable linear transformation to $A$ we can assume that $A=B\mmod p$ for a
set $B\seq[0,(p-1)/2]$ such that $0\in B$ and $\gcd B=1$.

\mypar Let $l:=\max B$. We have
\begin{equation}\label{e:l2n-4}
  l\le 2n-4:
\end{equation}
for otherwise $|2A|=|2B|\ge 3n-3$ by Theorem \reft{3n-3} and again we get a
contradiction as in \refe{loc1}. Applying Theorem \reft{3n-3} once again we
get $|2A|=|2B|\ge l+n$. Now by Corollary \refc{CD} for $k$ even from
$(k/2)(2A)\neq\Z/p\Z$ we obtain
\begin{gather}
  p + \frac k2 - 2 \ge \frac k2\, |2A| \ge \frac k2\, (l+n), \notag \\
  l < \frac2k\,p-n+1 < \left( \frac 2k-\frac1{k+1}\right)p + 1
                                             < \frac p{k-1} \label{e:loc2}
\end{gather}
and for $k$ odd from $\frac{k-1}2(2A)+A\neq\Z/p\Z$ we obtain
\begin{gather}
  p +\frac {k+1}2 - 2 \ge \frac{k-1}2\,|2A| + |A|
                               \ge \frac {k-1}2\, (l+n) + n, \notag \\
  l < \frac2{k-1}\,p-\left(1+\frac 2{k-1}\right)n + 1
        = \frac2{k-1}\,p-\frac {k+1}{k-1}\,n + 1
                                          < \frac p{k-1}+1  \label{e:loc3}.
\end{gather}

\mypar From \refe{loc2}, \refe{loc3}, and the definition of $l$ we conclude
that
\begin{equation}\label{e:k-1A}
  |(k-1)A|\ge|(k-1)B|-k,
\end{equation}
hence recalling \refe{l2n-4} and using Corollary \refc{hA} we get $|(k-1)A|
\ge n+(k-2)l-k$. Now applying Corollary \refc{CD} to the sets $(k-1)A$ and
$A$ we obtain
\begin{equation}\label{e:nklp}
  p\ge 2n+(k-2)l-k,
\end{equation}
implying
  $$ l < \frac{p-2n}{k-2}+2 < \frac{1-2/(k+1)}{k-2}\,p + 2 < \frac p{k-1} $$
an thus providing a small, but important strengthening of \refe{loc3}. Now
\refe{k-1A} can be strengthened to $|(k-1)A|=|(k-1)B|$, and accordingly
\refe{nklp} to $p\ge 2n+(k-2)l$. Therefore $l\le(p-2n)/(k-2)$ from which the
assertion follows. This proves Lemma \refl{kp}.
\end{proof}

\section{Proof of the Main Theorem: $0<\alp\le 0.1$}\label{s:sparse}

\begin{proof}[Proof of the Main Theorem for $0<\alp\le 0.1$]
Write $k:=\lfl1/\alp\rfl\ge 10$ and suppose, for a contradiction, that
$G(A)>(1-\alp)k$. As in the proof of Lemma \refl{kp}, we proceed by steps.

\setcounter{mypar}{0}

\mypar Without loss of generality we can assume that $A$ satisfies the
following three conditions:
\begin{itemize}
\item[(i)]  $A$ is a \emph{finite} union of open intervals;
\item[(ii)] $(2A)\cap (0,1)=A$ (informally, ``$A$ is closed under addition
  in $(0,1)$'');
\item[(iii)] $\sup A=1$.
\end{itemize}
For if (i) fails, we consider a subset $A'\seq A$ which is a finite union of
open intervals and the measure of which $\alp':=\mes A'$ is sufficiently
close to $\alp$; specifically, we request that $\lfl1/\alp'\rfl=k$ and
$G(A)>(1-\alp')k$. Then $G(A')\ge G(A)>(1-\alp')k$ and we simply replace $A$
with $A'$ and $\alp$ with $\alp'$.

Now assume that (i) holds and replace $A$ with $A'':=\cS(A)\cap(t,1)$, where
$t\in[0,1)$ is so chosen that $\mes A''=\alp$. Clearly, this new set remains
a finite union of open intervals and in addition satisfies
$(2A'')\cap(0,1)=A''$. Also, $G(A'')\ge G(A)>(1-\alp)k$.

Eventually, assuming both (i) and (ii) write $\sig:=\sup A$ and replace $A$
with $A''':=\{\sig^{-1}a\colon a\in A\}\cap(t,1)$, where (as above) we choose
$t\in[0,1)$ so that $\mes A'''=\alp$. Then (i) and (ii) remain valid and
(iii) becomes true, too, while $G(A''')\ge G(A)>(1-\alp)k$.

Let $m$ denote the number of intervals in $A$.

\mypar Write $\oA:=A\mmod 1$; we claim that
\begin{equation}\label{e:kAnotall}
  k(\oA\cup\{0\})\neq\R/\Z.
\end{equation}
Assume the opposite. Then for any real $u\in(k-1,k)$ there exist integers
$s\in[1,k]$ and $z\ge 0$ and elements $a_1\longc a_s\in A$ such that
$u=a_1\longp a_s+z$. Since $A$ is an open set and in view of $\sup A=1$ we
conclude that $u\in \cS(A)$, whence
\begin{equation}\label{e:k-1kcS}
  (k-1,k)\seq\cS(A).
\end{equation}
However, as shown in Section \refs{dense} (see remark at the beginning of
the proof of the Main Theorem for $0.1\le\alp\le 0.5$) we have
\begin{equation}\label{e:1-alp}
  G(A)\le (1-\alp+\alp\{1/\alp\})k < k.
\end{equation}
Comparing \refe{k-1kcS} and \refe{1-alp} we get $G(A)\le k-1\le(1-\alp)k$, a
contradiction establishing \refe{kAnotall}.

Fix a large prime number $p$ (it can be figured out from the subsequent
argument exactly how large $p$ is to be) and define
  $$ A_p := \{0\}\cup\{z\in\Z\colon [z/p,(z+1)/p)\seq A\}, $$
so that $A_p\seq[0,p-1]$. Write $\oA_p:=A_p\mmod p$ and let $z\in[1,p-1]$. If
the canonical image $\oz$ of $z$ in $\Z/p\Z$ satisfies $\oz\in k\oA_p$, then
clearly $[z/p,(z+1)/p)\mmod 1\seq k(\oA\cup\{0\})$. It follows that
$k\oA_p=\Z/p\Z$ would imply $[1/p,1)\mmod 1\seq k(\oA\cup\{0\})$, which is
wrong for sufficiently large $p$ in view of \refe{kAnotall} and $0\in
k(\oA\cup\{0\})$. Thus
\begin{equation}\label{e:koA_p}
  k\oA_p\neq\Z/p\Z.
\end{equation}

\mypar Set $n:=|\oA_p|$. By (i), for $p$ large enough we have
\begin{equation}\label{e:nalpp}
  n=\alp p+O(1)>p/(k+1)
\end{equation}
(with the implicit constant depending on the number of intervals $m$). By
\refe{koA_p}, \refe{nalpp}, and Lemma \refl{kp}, there exist
 $\od\in [1,(p-1)/2]\mmod p$ and non-negative integers $n_1,n_2$ such that
\begin{equation}\label{e:n1n2large}
  n_1+n_2\le \frac{p-2n}{k-2}
\end{equation}
and $\oA_p\seq\{-n_1\od\longc -\od,0,\od\longc n_2\od\}$. (Recall, that
 $0\in A_p$.) Clearly, we can assume without loss of generality that
$-n_1\od,n_2\od\in\oA_p$. Let $d\in[1,(p-1)/2]$ be the integer such that
$\od$ is the canonical image of $d$.

Evidently, $\oA_p$ is a union of at most $m+1$ arithmetic progressions with
difference $1$. The longest of these progressions has at least $n/(m+1)$
terms, hence there exist integers $s\ge n/(m+1)$ and
 $x_1\longc x_s\in[-n_1,n_2]$ such that
\begin{equation}\label{e:chain}
  (x_{i+1}-x_i)d\equiv 1\mmod p;\qquad i=1\longc s-1.
\end{equation}
Define $d'\in(-p/2,p/2)$ by $dd'\equiv 1\mmod p$. Multiplying \refe{chain} by
$d'$ we get $x_{i+1}-x_i\equiv d'\mmod p$ whence $x_{i+1}-x_i=d'$ and
therefore $x_s-x_1=(s-1)d'$, implying
\begin{equation}\label{e:d5m}
  |d'| \le \frac {n_2+n_1}{s-1} < \frac{p}{k-2}\,\frac{3m}n
       < \frac{k+1}{k-2}\,\cdot 3m < 5m
\end{equation}
by \refe{nalpp} and \refe{n1n2large}.

We have either $|dd'|=1$, in which case $d=1$, or $|dd'|\ge p-1$, and then
\begin{equation}\label{e:dlarge}
  d \ge \frac{p-1}{5m-1} > \frac p{5m}
\end{equation}
by \refe{d5m}. We proceed to show that indeed $d=1$.

\mypar Assume that \refe{dlarge} holds true. We observe that property (ii)
implies
\begin{equation}\label{e:half}
  2(\oA_p \cap [0,(p-1)/2]\mmod p)\seq \oA_p.
\end{equation}
It follows, in particular, that
\begin{equation}\label{e:n1d}
  n_1\od\in[0,(p-1)/2]\mmod p:
\end{equation}
for otherwise $-n_1\od\in[1,(p-1)/2]\mmod p$ and then $-2n_1\od\in\oA_p$ by
\refe{half}, hence $-2n_1\in[-n_1,n_2]$, which is wrong. Similarly,
\begin{equation}\label{e:n2d}
  n_2\od\in[(p+1)/2,p]\mmod p.
\end{equation}

Set
\begin{align*}
  N_1 &:= \# \{ x\in[-n_1,-n_1/2)\colon x\od\in[0,(p-1)/2]\mmod p \} \\
      &\;=  \# \{ x\in(n_1/2,n_1]\colon x\od\in[(p+1)/2,p]\mmod p \} \\
\intertext{and}
  N_2 &:=  \# \{ x\in(n_2/2,n_2]\colon x\od\in[0,(p-1)/2]\mmod p \}.
\end{align*}
We show that
\begin{equation}\label{e:N1N2}
  N_i \ge \frac{n_i}8 - \frac p{2d};\qquad i=1,2;
\end{equation}
then in view of \refe{dlarge}, \refe{n1n2large}, and \refe{nalpp} we will get
\begin{align*}
  N_1+N_2 &\ge \frac{n_1+n_2}8-\frac pd \\
          &>   n_1 + n_2 - \frac 78\,(n_1+n_2) - 5m \\
          &\ge n_1 + n_2 - \frac 78\, \frac{p-2n}{k-2} - 5m \\
          &=   n_1 + n_2 - n + \frac{4k-1}{4(k-2)}\,n
                                    - \frac 78\,\frac{p}{k-2} - 5m \\
          &>   n_1 + n_2 - n + \frac 1{8(k-2)}
                    \,\left( \frac{2(4k-1)}{k+1} - 7 \right)p - 5m \\
          &=   n_1 + n_2 - n + \frac {k-9}{8(k-2)(k+1)}\,p - 5m \\
          &>   n_1 + n_2 + 1 - n
\end{align*}
so that there exists an integer $x\in[-n_1,-n_1/2)\cup(n_2/2,n_2]$ such that
$x\od\in\oA_p\cap[0,(p-1)/2]\mmod p$. Considering the doubling $2x\od$ and
taking into account \refe{half} we obtain then a contradiction.

\mypar As we have just shown, to prove that $d=1$ it suffices to establish
\refe{N1N2}. We address the estimate of $N_1$ only; $N_2$ can be estimated in
a similar way. Clearly, we can assume that $n_1>0$, and then by \refe{n1d}
there exists a non-negative integer $t$ such that
  $$ \frac {2tp}{2d} \le n_1 < \frac{(2t+1)p}{2d}. $$
For any integer $s\in[(t/2)+1,t]$ we have
  $$ \frac{(2s-1)p}{2d} \ge \frac{(t+1)p}{2d} > \frac{n_1}2
        \quad\text{and}\quad \frac{2sp}{2d} \le \frac{2tp}{2d} \le n_1, $$
hence the interval
  $$ I_s := \left( \frac{(2s-1)p}{2d},\, \frac{2sp}{2d} \right] $$
satisfies $I_s\seq(n_1/2,n_1]$. The length of $I_s$ is $p/(2d)>1$; thus the
number of integers $x\in I_s$ is at least $p/(4d)$ and evidently,
 $\od x\in[(p+1)/2,p]\mmod p$ for any such integer. It follows that
  $$ N_1 \ge \sum_{(t/2)+1\le s\le t} |I_s|
      \ge \lfl \frac t2 \rfl \frac p{4d}
      > \left( \frac t2-1 \right) \frac p{4d}
      > \frac{n_1}8-\frac{p}{2d}, $$
as required.

\mypar We proved that $d=1$ and comparing \refe{n2d} with \refe{n1n2large} we
conclude that $n_2=0$; that is, $\oA_p\seq[-n_1,0]\mmod p$. Letting
$\tau:=\inf A$ and $\lam:=1-\tau$ we obtain
  $$ \tau \ge 1-\frac{n_1+1}p
              \ge 1-\left(\frac{p-2n}{k-2}+1\right) p^{-1}
                                       = 1 - \frac{1-2n/p}{k-2} - \frac1p; $$
recalling \refe{nalpp} we get $\tau\ge 1-(1-2\alp)/(k-2)$, whence
  $$ \lam \le \frac{1-2\alp}{k-2} = \alp\,\left(1+\frac{1/\alp-k}{k-2}\right)
        < \alp\,\left(1+\frac1{k-2} \right) < 2\alp. $$

To complete the proof we invoke Corollary \refc{clint} which gives
  $$ (h\tau+2(\lam-\alp),h-2(\lam-\alp)) \seq hA \seq \cS(A) $$
for any integer $h\ge 2$. It remains to observe that if $h\ge k$, then
\begin{align*}
  h-2(\lam-\alp)
    &=   (h+1)\tau + (h+1)\lam - 1 - 2(\lam-\alp) \\
    &=   (h+1)\tau + 2(\lam-\alp) + (4\alp + (h-3)\lam - 1) \\
    &\ge (h+1)\tau + 2(\lam-\alp) + (h+1)\alp - 1 \\
    &>   (h+1)\tau + 2(\lam-\alp)
\end{align*}
and therefore
  $$ G(A) \le k\tau+2(\lam-\alp) = k-(k-2)\lam-2\alp
                                     \le k-(k-2)\alp-2\alp = (1-\alp)k, $$
contrary to the assumption. This completes the proof of the Main Theorem.
\end{proof}

\vfill

\bigskip

\end{document}